\theoremstyle{plain}
\newtheorem{thm}{Theorem}[section]
\newtheorem{prop}[thm]{Proposition}
\newtheorem{cor}[thm]{Corollary}
\newtheorem{ques}[thm]{Question}
\newtheorem{conj}[thm]{Conjecture}
\theoremstyle{definition}
\newtheorem{dfn}[thm]{Definition}
\newtheorem{rem}[thm]{Remark}
\newtheorem{dfns-rems}[thm]{Definitions and Remarks}
\newtheorem{notas-rems}[thm]{Notations and Remarks}
\newtheorem{exmps-rems}[thm]{Examples and Remarks}
\begin{document}

% ------------------------------------------------------------------------

\title[Free resolution of powers of monomial ideals and Golod rings]{Free resolution of powers of monomial ideals and Golod rings}

% ------------------------------------------------------------------------

\author[N. Altafi]{N. Altafi}

\address{N. Altafi, School of Mathematics, Statistics and Computer Science,
College of Science, University of Tehran, Tehran, Iran, and School of
Mathematics, Tehran, Iran.}

\email{nasrin.altafi@gmail.com}

\author[N. Nemati]{N. Nemati}

\address{N. Nemati, School of Mathematics, Statistics and Computer Science,
College of Science, University of Tehran, Tehran, Iran, and School of
Mathematics, Tehran, Iran.}

\email{navid.nemati.math@gmail.com}

\author[S. A. Seyed Fakhari]{S. A. Seyed Fakhari}

\address{S. A. Seyed Fakhari, School of Mathematics, Institute for Research
in Fundamental Sciences (IPM), P.O. Box 19395-5746, Tehran, Iran.}

\email{fakhari@ipm.ir}

\urladdr{http://math.ipm.ac.ir/fakhari/}

\author[S. Yassemi]{S. Yassemi}

\address{S. Yassemi, School of Mathematics, Statistics and Computer Science,
College of Science, University of Tehran, Tehran, Iran, and School of
Mathematics, Institute for Research in Fundamental Sciences (IPM), P.O. Box
19395-5746, Tehran, Iran.}

\email{yassemi@ipm.ir}

\urladdr{http://math.ipm.ac.ir/yassemi/}

% ------------------------------------------------------------------------

\begin{abstract}
Let $S = \mathbb{K}[x_1, \dots, x_n]$ be the polynomial ring over a field $\mathbb{K}$. In this paper we present a criterion for componentwise linearity of powers of monomial ideals. In particular, we prove that if a square-free monomial ideal $I$ contains no variable and some power of $I$ is componentwise linear, then $I$ satisfies gcd condition. For a square-free monomial ideal $I$ which contains no variable, we show that $S/I$ is a Golod ring provided that for some integer $s\geq 1$, the ideal $I^s$ has linear quotient with respect to a monomial order. We also provide a lower bound for some Betti numbers of powers of a square-free monomial ideal which is generated in a single degree.
\end{abstract}

% ------------------------------------------------------------------------

\subjclass[2000]{Primary: 13D02; Secondary: 13F20 13C14}

% ------------------------------------------------------------------------

\keywords{Monomial ideal, Componentwise linear, Betti number, {\rm Lcm} lattice, Golod ring, Linear quotient}

% ------------------------------------------------------------------------

\thanks{The research of S. A. Seyed Fakhari, and S. Yassemi
was in part supported by a grant from IPM (No. 92130422, and
No. 92130214)}

% ------------------------------------------------------------------------

\maketitle

%%%%%%%%%%%%%%%%%%%%%%%%%%%%%%%%%%%%%%%%%%%%%%%%%%%%%%%%%%%%%%%%%%%%%%%%%%

\section{Introduction and preliminaries} \label{sec1}

Over the last 20 years the study of algebraic, homological and combinatorial
properties of powers of ideals has been one of the major topics in Commutative
Algebra. In this paper we study the minimal free resolution of the powers of monomial ideals. First we give some definitions and basic facts.

Let $S = \mathbb{K}[x_1, \dots, x_n]$ be the polynomial ring over a field $\mathbb{K}$. For any
finitely generated $\mathbb{Z}^n$-graded $S$-module $M$ and every $\mathbf{a}\in \mathbb{Z}^n$, let $M_{\mathbf{a}}$ denote its graded piece of degree $\mathbf{a}$
and let $M(\mathbf{a})$ denote the {\it twisting} of $M$ by $\mathbf{a}$, i.e. the module where $M(\mathbf{a})_{\mathbf{b}}= M_{\mathbf{a}+\mathbf{b}}$. As usual for every vector $\mathbf{a}=(a_1, \ldots, a_n)\in \mathbb{Z}^n$, we denote by $\mid \mathbf{a}\mid$ the absolute value of $\mathbf{a}$ which is equal to $a_1+ \ldots +a_n$.

A {\it minimal graded free resolution} of $M$ is an exact complex
$$0 \longrightarrow F_p \longrightarrow F_{p-1} \longrightarrow  F_1 \longrightarrow  F_0 \longrightarrow  M \longrightarrow 0,$$
where each $F_i$ is a $\mathbb{Z}^n$-graded free $S$-module of the form $\oplus_{\mathbf{a}\in \mathbb{Z}^n}S(-\mathbf{a})^{\beta_{i,\mathbf{a}}(M)}$ such that the
number of basis elements is minimal and each map is graded.
The value $\beta_{i,\mathbf{a}}(M)$ is called the $i$th $\mathbb{Z}^n$-graded Betti number of degree $\mathbf{a}$. The number $\beta_{i,j}(M)=\sum_{\mid \mathbf{a}\mid=j}\beta_{i,\mathbf{a}}(M)$ is called the $i$th $\mathbb{Z}$-graded Betti number of degree $j$. These numbers are independent
of the choice of resolution of $M$, thus yielding important numerical invariants of
$M$. To simplify the notations, for every monomial $u=\mathbf{x^a}=x_1^{a_1}\ldots x_n^{a_n}$, we write $\beta_{i,\mathbf{u}}(M)$ instead of $\beta_{i,\mathbf{a}}(M)$.

Let $I$ be a Monomial ideal of $S$ and let $G(I)$ denote the set of minimal monomial generators of $I$. Then $I$ is said to
have {\it linear resolution}, if for some integer $d$, $\beta_{i,i+t}(I)=0$
for all $i$ and every $t\neq d$. It is clear from the definition that if a monomial ideal has a linear resolution, then all the minimal monomial generators of $I$ have the same degree. There are many attempts to characterize the monomial ideals with linear resolution. One of the most important results in this direction is due to Fr${\rm \ddot{o}}$berg \cite[Theorem 1]{f}, who characterized all square-free monomial ideals generated by quadratic monomials, which have linear resolutions. It is also interesting to ask whether some powers of a given monomial ideal $I$ has linear resolution. It is known \cite{ht} that polymatroidal ideals have linear resolutions and that powers
of polymatroidal ideals are again polymatroidal (see \cite{hh}). In particular
they have again linear resolutions. In general however, powers of ideals with linear
resolution need not to have linear resolutions. The first example of such an ideal
was given by Terai. He showed that over a base field of characteristic $\neq 2$ the Stanley
Reisner ideal $I = (x_1x_2x_3, x_1x_2x_5, x_1x_3x_6, x_1x_4x_5, x_1x_4x_6, x_2x_3x_4, x_2x_4x_6, x_2x_5x_6, x_3x_4x_5, x_3x_5x_)$ of the minimal
triangulation of the projective plane has a linear resolution, while $I^2$ has no linear
resolution. This example depends on the characteristic of the base field. If the base
field has characteristic $2$, then I itself has no linear resolution.
Another example, namely $I = (x_4x_5x_6, x_3x_5x_6, x_3x_4x_6, x_3x_4x_5, x_2x_5x_6, x_2x_3x_4, x_1x_3x_6, x_1x_4x_5)$ is given by
Sturmfels \cite{s}. Again $I$ has a linear resolution, while $I^2$ has no linear resolution. However, Herzog, Hibi and Zheng \cite{hhz} prove that a monomial ideal $I$ generated in degree $2$ has linear resolution if and only if every power of $I$ has linear resolution.

Componentwise linear ideals are introduced by Herzog and Hibi \cite{hh2} and they are defined as follows.
For a monomial ideal $I$ we write $I_{\langle j\rangle}$
for the ideal generated by all monomials of degree $j$
belonging to $I$. A monomial ideal $I$ is called {\it
componentwise linear} if $I_{\langle j\rangle}$ has a linear resolution for
all $j$. In Section \ref{sec2} we study the componentwise linearity of powers of monomial ideals. More explicit, we prove that if $I$ is a monomial ideal, which contains no variable and some power of $I$ is componentwise linear, then for every couple of monomials $u,v\in G(I)$ with ${\rm gcd}(u,v)=1$, there exists a monomial $w\in G(I)$ such that $w\neq u,v$ and ${\rm supp}(w)\subseteq {\rm supp}(u)\cup {\rm supp}(v)$ (see Theorem \ref{main}). Let $I$ be a square-free monomial ideal which contains no variable. We prove that if some power of $I$ has linear quotient with respect to a monomial order (see Definition \ref{lq}), then $S/I$ is a Golod ring (see Theorem \ref{golod}).  One of the main tool which is used in this paper is the lcm lattice, defined by Gasharov, Peeva and Welker \cite{gpw}.

{\bf Lcm lattice. }Let $I$ be a monomial ideal minimally generated by monomials
$m_1,\ldots,m_d$. We denote by $L_I$ the lattice with elements labeled by
the least common multiples of $m_1,\ldots,m_d$ ordered by divisibility. In
particular, the atoms in $L_I$ are $m_1,\ldots, m_d$, the maximal element
is ${\rm lcm}(m_1,\ldots,m_d)$, and the minimal element is $1$ regarded as
the lcm of the empty set. The least common multiple of elements in $L_I$ is
their join, i.e., their least common upper bound in the poset $L_I$. We
call $L_I$ the {\it lcm-lattice} of $I$. Gasharov, Peeva and Welker have
proven that the lcm-lattice of a monomial ideal determines its free
resolution. For an open interval $(1,m)$
in $L_I$, we denote by $\Delta(1,m)$ the order complex of the interval, i.e., the simplicial complex whose facets are the maximal chains in $(1,m)$. Let
$\widetilde{H}_i(\Delta(1,m); \mathbb{K})$ denote the $i$th reduced homology of $\Delta(1,m)$ with coefficients
in $\mathbb{K}$. By \cite[Theorem 3.3]{gpw}, the $\mathbb{Z}^n$-graded Betti numbers of $I$ can be computed
by the homology of the open intervals in $L_I$ as follows: if $m \notin L_I$ then
$\beta_{i,m}(I) = 0$ for every $i$, and if $m \in L_I$ and $i\geq 1$ we have $$\beta_{i,m}(I)={\rm dim}_{\mathbb{K}}\widetilde{H}_{i-1}(\Delta(1,m); \mathbb{K}).$$Using this formula in Section \ref {sec3}, we provide a lower bound for some Betti numbers of powers of a square-free monomial ideal which is generated in a single degree (see Theorem \ref{beti}).

%%%%%%%%%%%%%%%%%%%%%%%%%%%%%%%%%%%%%%%%%%%%%%%%%%%%%%%%%%%%%%%%%%%%%%%%%%

\section{Componentwise linearity and Golod rings} \label{sec2}

The following theorem is the first main result of this paper.

\begin{thm} \label{main}
Let $I$ be a monomial ideal, which contains no variable. Assume that there exists an integer $s\geq 1$ such that $I^s$ is componentwise linear. Then for every couple of monomials $u,v\in G(I)$ with ${\rm gcd}(u,v)=1$, there exists a monomial $w\in G(I)$ such that $w\neq u,v$ and ${\rm supp}(w)\subseteq {\rm supp}(u)\cup {\rm supp}(v)$.
\end{thm}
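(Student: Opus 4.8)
The plan is to argue by contradiction: suppose $u,v\in G(I)$ with $\gcd(u,v)=1$, but no $w\in G(I)\setminus\{u,v\}$ has $\operatorname{supp}(w)\subseteq\operatorname{supp}(u)\cup\operatorname{supp}(v)$. I want to extract from this a specific minimal generator of $I^s$ whose presence forces $I^s$ to fail the componentwise linear property in some degree. The natural candidate to look at is $m:=u^{a}v^{b}$ with $a+b=s$ (in particular $u^{s-1}v$, $uv^{s-1}$, etc.), whose support is exactly $\operatorname{supp}(u)\cup\operatorname{supp}(v)$ and which lies in $I^s$. The failure hypothesis says that the only generators of $I$ "visible inside" $\operatorname{supp}(u)\cup\operatorname{supp}(v)$ are $u$ and $v$ themselves, and since $\gcd(u,v)=1$, the divisors of $m$ that are products of $s$ generators of $I$ are extremely restricted — essentially only the $u^{a}v^{b}$. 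This combinatorial rigidity is what I would exploit.

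Concretely, I would use the lcm-lattice description of the $\mathbb{Z}^n$-graded Betti numbers from \cite[Theorem 3.3]{gpw}, applied to $I^s$. Consider the multidegree $m=u^{s-1}v$ (say, assuming $\deg u\le\deg v$; otherwise swap). I would analyze the open interval $(1,m)$ in $L_{I^s}$: its atoms are precisely the minimal generators of $I^s$ dividing $m$. Because $\gcd(u,v)=1$ and no other generator of $I$ fits in $\operatorname{supp}(u)\cup\operatorname{supp}(v)$, the generators of $I^s$ dividing $m$ are exactly $\{u^{a}v^{b}: a+b=s,\ a\le s-1,\ b\ge 1 \text{ with } u^av^b\mid u^{s-1}v\}$ — a short list, and in fact one shows the relevant sub-poset of $L_{I^s}$ below $m$ is small enough that $\widetilde{H}_{*}(\Delta(1,m);\mathbb{K})$ is supported in a homological degree $i-1$ forcing a nonlinear strand: one gets a nonzero $\beta_{i,m}(I^s)$ with $|m|\ne i+ (\text{the linear degree of }(I^s)_{\langle|m|\rangle})$. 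The point is that $(I^s)_{\langle j\rangle}$ having a linear resolution pins the regularity of that component, and the generator $u^{s-1}v$ sits at a multidegree whose homological position in $L_{I^s}$ violates the linearity bound precisely because $u$ and $v$ are "too independent" to be connected through other generators.

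The main obstacle, and where I would spend most of the effort, is making the homology computation on $\Delta(1,m)$ rigorous: I need to show that the order complex of $(1,m)$ in $L_{I^s}$ really does have nonvanishing reduced homology in the "wrong" degree, and pin down exactly which $j=|m'|$ and which homological index $i$ witness the failure of linearity of $(I^s)_{\langle j\rangle}$. A clean way to do this is to first reduce to the ideal $J\subseteq \mathbb{K}[\operatorname{Var}(u)\cup\operatorname{Var}(v)]$ obtained by restricting to the variables in $\operatorname{supp}(u)\cup\operatorname{supp}(v)$ — the failure hypothesis says $G(J)=\{u,v\}$ up to generators supported only on one of the two disjoint variable sets, and one uses the fact (which follows from \cite{gpw}) that passing to this subset of variables only deletes Betti numbers, so a witness for non-componentwise-linearity of $J^s$ lifts to one for $I^s$. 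For $J$ with essentially two coprime generators the lcm-lattice is a product-type lattice and its intervals' homology is computable by a Künneth/join argument, yielding the desired nonlinear Betti number. I would then double-check the degenerate cases ($\deg u=\deg v$, or one of $u,v$ a proper power situation) separately, and confirm the "contains no variable" hypothesis is used to ensure $u,v$ are genuine coprime generators of degree $\ge 2$ so that $\operatorname{supp}(u)\cup\operatorname{supp}(v)$ is large enough for the homology argument to bite.
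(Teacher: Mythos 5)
Your overall strategy (argue by contradiction, exhibit a multidegree whose open interval in an lcm lattice has homology in a nonlinear position) is the same as the paper's, but two of your concrete choices do not work as stated. First, the multidegree. Any monomial of the form $u^av^b$ with $a+b=s$ is, under the contradiction hypothesis, the \emph{unique} minimal generator of $I^s$ dividing itself: the generators of $I^s$ dividing $u^{s-1}v$ are products $g_1\cdots g_s$ with each $g_i\in\{u,v\}$ (no other generator of $I$ fits inside ${\rm supp}(u)\cup{\rm supp}(v)$), and divisibility forces $(a,b)=(s-1,1)$. Hence the open interval $(1,u^{s-1}v)$ is empty and $\beta_{i,u^{s-1}v}(I^s)=0$ for all $i\geq 1$; this multidegree detects nothing. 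The essential trick you are missing is to ``overshoot'': the paper takes $m=uv^s$ (with $d'=\deg u\leq d=\deg v$), below which two incomparable elements live, so that $\widetilde{H}_0(\Delta(1,m);\mathbb{K})\neq 0$ and one gets a genuine first syzygy in total degree $ds+d'$.

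Second, and more importantly, you compute in $L_{I^s}$ and only gesture at ``pinning down which component $j$ fails.'' This is the actual crux: a nonzero $\beta_{i,\mathbf a}(I^s)$ in an apparently nonlinear total degree does \emph{not} by itself contradict componentwise linearity, because $I^s$ is in general generated in several degrees and a componentwise linear ideal can have Betti numbers spread over many strands (your proposed degree $ds+d'-1$ could coincide with a generator degree of $I^s$). The paper sidesteps this entirely by working in the lcm lattice of the single component $I^s_{\langle ds\rangle}$, which is generated purely in degree $ds$: there the atom $v^s$ is isolated in $\Delta(1,uv^s)$ while $u'=uv^{s-1}\cdot(\text{a degree-}(d-d')\text{ divisor of }v)$ is another vertex, giving $\beta_{1,ds+d'}(I^s_{\langle ds\rangle})\neq 0$ with $d'\geq 2$ (this is where ``$I$ contains no variable'' enters, as you correctly anticipated), flatly contradicting linearity of that one component. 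Your proposed reduction to $J=(u,v)$ and a K\"unneth argument is unnecessary overhead and does not repair either gap, since the obstruction is a disconnected zero-dimensional interval, not higher homology of a product lattice; also note that under the hypothesis there are no generators supported in only one of the two variable sets other than $u$ and $v$ themselves, so your parenthetical caveat there is vacuous.
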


\begin{proof}
By contradiction, suppose that there exist $u,v\in G(I)$ such that ${\rm gcd}(u,v)=1$ and there is no monomial $w\in G(I)$ such that ${\rm supp}(w)\subseteq {\rm supp}(u)\cup {\rm supp}(v)$. Without loss of generality assume that ${\rm deg}(u)=d'\leq {\rm deg}(v)=d$. Note that $v^s\in I^s_{\langle ds \rangle}$. On the other hand $uv^{s-1}\in I^s$. So if we multiply $uv^{s-1}$ to a divisor of $v$ of degree $d-d'$, we obtain a monomial $u'\in I^s_{\langle ds \rangle}$ with ${\rm lcm}(u', v^s)=uv^s$.

Consider the open interval $(1, uv^s)$ in the lcm lattice of $I^s_{\langle ds \rangle}$. We claim that the atom $v^s$ is an isolated vertex of $\Delta(1, uv^s)$. Assume that this is not true. Then there exists an atom $w'\in (1, uv^s)$ such that $w'\neq v^s$ and ${\rm lcm}(v^s, w')$ strictly divides $uv^s$. This implies that ${\rm gcd}(w', u)\neq 1$. Since $w'\in I^s$ and since there is no monomial $w\in G(I)$ such that ${\rm supp}(w)\subseteq {\rm supp}(u)\cup {\rm supp}(v)$, it follows that $u\mid w'$. Thus ${\rm lcm}(v^s, w')=uv^s$, which is a contradiction. This proves our claim. Now $u'$ is another vertex of $\Delta(1, uv^s)$ and thus $\Delta(1, uv^s)$ is disconnected. Hence by \cite[Theorem 3.3]{gpw}
\begin{center}
$\beta_{1,uv^s}(I^s_{\langle ds \rangle})={\rm dim}_{\mathbb{K}}\tilde{H}_0(\Delta(1,uv^s);\mathbb{K})\geq1$
\end{center}
This in particular shows that $\beta_{1, ds+d'}(I^s_{\langle ds \rangle})\neq 0$. Since $I$ contains no variable, $d'\geq 2$ and therefore the minimal free resolution of $I^s_{\langle ds \rangle}$ is not linear. Which is a contradiction.
\end{proof}

Monomial ideals with gcd condition and strong gcd condition are defined by J${\rm \ddot{o}}$llenbeck as follows.

\begin{dfn}
\cite[Definition 3.8]{j}
\begin{itemize}
\item[(i)] A monomial ideal $I$ is said to satisfy the {\it gcd condition}, if for any two monomials $u,v\in G(I)$
with ${\rm gcd}(u, v) = 1$ there exists a monomial $w\neq u,v$ in $G(I)$ with $w\mid {\rm lcm}(u, v)=uv$.

\item[(ii)] A monomial ideal $I$ is said to satisfy the {\it strong gcd condition}, if there exists a linear order $\prec$ on
$G(I)$ such that for any two monomials $u\prec v\in G(I)$
with ${\rm gcd}(u, v) = 1$ there exists a monomial $w\neq u,v$ in $G(I)$ with $u\prec w$ and $w\mid {\rm lcm}(u, v)=uv$.
\end{itemize}
\end{dfn}

One should note that a square-free monomial ideal $I$ satisfies gcd condition if and only if for every couple of monomials $u,v\in G(I)$ with ${\rm gcd}(u,v)=1$, there exists a monomial $w\in G(I)$ such that $w\neq u,v$ and ${\rm supp}(w)\subseteq {\rm supp}(u)\cup {\rm supp}(v)$. Thus as a consequence of Theorem \ref{main} we conclude the following corollary.

\begin{cor} \label{scomp}
Let $I$ be a square-free monomial ideal which contains no variable. Assume that $I^s$ is componentwise linear for some $s\geq 1$, then $I$ satisfies gcd condition.
\end{cor}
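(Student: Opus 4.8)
The plan is to derive Corollary \ref{scomp} directly from Theorem \ref{main} together with the combinatorial reformulation of the gcd condition that precedes it. First I would recall that for a square-free monomial ideal $I$, having $w \mid {\rm lcm}(u,v)$ for two coprime generators $u,v$ is equivalent to ${\rm supp}(w) \subseteq {\rm supp}(u) \cup {\rm supp}(v)$: indeed, when ${\rm gcd}(u,v)=1$ the monomial ${\rm lcm}(u,v)$ is just the square-free monomial $uv$ whose support is ${\rm supp}(u) \sqcup {\rm supp}(v)$, and a square-free monomial $w$ divides $uv$ precisely when ${\rm supp}(w)$ sits inside that union. This is exactly the observation stated in the paragraph before the corollary, so I would simply invoke it.

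Next, I would take two arbitrary generators $u,v \in G(I)$ with ${\rm gcd}(u,v)=1$. Since $I$ is square-free and contains no variable, the hypothesis that $I^s$ is componentwise linear for some $s \geq 1$ lets me apply Theorem \ref{main}: there exists $w \in G(I)$ with $w \neq u,v$ and ${\rm supp}(w) \subseteq {\rm supp}(u) \cup {\rm supp}(v)$. By the equivalence recalled above, this $w$ satisfies $w \mid {\rm lcm}(u,v) = uv$ and $w \neq u,v$, which is precisely the defining property of the gcd condition. Since $u,v$ were an arbitrary coprime pair in $G(I)$, this shows $I$ satisfies the gcd condition.

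Honestly, there is no real obstacle here — the corollary is a direct translation of Theorem \ref{main} through the support/divisibility dictionary for square-free monomials, and all the substantive work (the lcm-lattice homology argument producing a disconnected order complex) has already been carried out in the proof of Theorem \ref{main}. The only point that warrants a sentence of care is confirming that the square-free hypothesis is genuinely used, namely in identifying ${\rm lcm}(u,v)$ with $uv$ and in passing between divisibility and containment of supports; for a general monomial ideal these two conditions diverge, which is why the corollary is stated only in the square-free case even though Theorem \ref{main} itself does not assume square-freeness.
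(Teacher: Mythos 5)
Your proposal is correct and follows exactly the paper's route: the paper also obtains the corollary by combining Theorem \ref{main} with the observation (stated just before the corollary) that for square-free monomials the condition $w \mid {\rm lcm}(u,v) = uv$ is equivalent to ${\rm supp}(w) \subseteq {\rm supp}(u) \cup {\rm supp}(v)$. Your added remark on where square-freeness is genuinely needed is accurate but not a departure from the paper's argument.
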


To any finite simple graph $G$ with vertex set
$V(G)=\{v_1,\ldots,v_n\}$ and edge set $E(G)$, one associates an ideal
$I(G)$ generated by all quadratic monomials $x_ix_j$
such that $\{v_i,v_j\}\in E(G)$. The ideal $I(G)$ is called the {\it edge ideal of $G$}.
We recall that for a graph $G=(V(G),E(G))$, its {\it complementary graph}
$G^c$ is a graph with $V(G^c)=V(G)$ and $E(G^c)$
consists of those $2$-element subsets $\{v_i,v_j\}$ of $V(G)$ for which
$\{v_i,v_j\}\notin E(G)$. If we restrict ourselves to edge ideal of graphs, we obtain the following result which was proved by Nevo and Peeva \cite[Proposition 1.8]{np}.

\begin{cor} \label{graph}
Let $I=I(G)$ be the edge ideal of a graph $G$. If $I^s$ has linear resolution for some $s\geq 1$, then $G^c$ has no induced $4$-cycle.
\end{cor}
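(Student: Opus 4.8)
The plan is to deduce Corollary~\ref{graph} from Corollary~\ref{scomp} by exploiting the fact that for an ideal generated in degree $2$, having a linear resolution is the same as being componentwise linear (the component $I_{\langle 2\rangle}$ equals $I$ and the higher components $I_{\langle j\rangle}$ for $j>2$ automatically have linear resolution, since $I_{\langle j\rangle}=\mathfrak{m}^{j-2}I$ and powers of the maximal ideal times such an ideal are known to have linear resolution; alternatively one invokes that a monomial ideal with linear resolution is componentwise linear, which is standard). First I would observe that $I^s$ is again generated in a single degree $2s$, so $I^s$ has linear resolution if and only if $I^s$ is componentwise linear. Then Corollary~\ref{scomp} applies directly: $I=I(G)$ is a square-free monomial ideal, and it contains no variable precisely because its generators are the quadratic monomials $x_ix_j$ with $\{v_i,v_j\}\in E(G)$. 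Hence $I$ satisfies the gcd condition.

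Next I would translate the gcd condition for the edge ideal into a statement about $G^c$. Suppose for contradiction that $G^c$ has an induced $4$-cycle on vertices $v_a,v_b,v_c,v_d$ (in cyclic order $v_a v_b v_c v_d$), so that $\{v_a,v_b\},\{v_b,v_c\},\{v_c,v_d\},\{v_d,v_a\}\in E(G^c)$ while the two diagonals $\{v_a,v_c\}$ and $\{v_b,v_d\}$ lie in $E(G)$. Then $u=x_ax_c$ and $v=x_bx_d$ are both minimal generators of $I=I(G)$, and $\mathrm{gcd}(u,v)=1$ since the four vertices are distinct. The gcd condition produces a generator $w\in G(I)$, $w\neq u,v$, with $\mathrm{supp}(w)\subseteq\mathrm{supp}(u)\cup\mathrm{supp}(v)=\{v_a,v_b,v_c,v_d\}$. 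Since $w$ is a minimal generator of the edge ideal, $w=x_ix_j$ for an edge $\{v_i,v_j\}\in E(G)$ with $v_i,v_j\in\{v_a,v_b,v_c,v_d\}$; as $w\neq u=x_ax_c$ and $w\neq v=x_bx_d$, the pair $\{v_i,v_j\}$ must be one of the four ``sides'' $\{v_a,v_b\},\{v_b,v_c\},\{v_c,v_d\},\{v_d,v_a\}$. But all four of those pairs are edges of $G^c$, hence non-edges of $G$, so no such generator $w$ can exist. This contradiction shows $G^c$ has no induced $4$-cycle.

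The only genuine point requiring care is the equivalence, for an ideal generated in a single degree, between ``linear resolution'' and ``componentwise linear''; I expect this to be the main (though minor) obstacle, and it is resolved by recalling Herzog--Hibi's observation that an ideal with linear resolution is componentwise linear, together with the fact that the powers $I^s$ of an edge ideal are generated in the single degree $2s$. Everything else is a direct unwinding of definitions: the correspondence $w\leftrightarrow$ edge, and the observation that the support condition $\mathrm{supp}(w)\subseteq\mathrm{supp}(u)\cup\mathrm{supp}(v)$ forces $w$ to correspond to one of the four sides of the putative induced $4$-cycle in $G^c$, each of which is by hypothesis a non-edge of $G$.
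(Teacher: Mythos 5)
Your proof is correct and follows essentially the same route as the paper: reduce to Corollary~\ref{scomp} (using that a power of an edge ideal is generated in a single degree, so linear resolution and componentwise linearity coincide) and then observe that the two diagonals of an induced $4$-cycle in $G^c$ give generators $u,v$ of $I(G)$ with $\gcd(u,v)=1$ admitting no third generator dividing $uv$. The only difference is that you make explicit the (standard) passage from ``linear resolution'' to ``componentwise linear,'' which the paper leaves implicit.
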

\begin{proof}
Assume that the assertion is not true. So $G^c$ has an induced $4$-cycle, say $v_1, v_2, v_3, v_4$. Set $u=x_1x_3$ and $v=x_2x_4$. Then $u, v\in G(I)$ and ${\rm gcd}(u,v)=1$. But there is no monomial $w\in G(I)$ such that $w\mid uv$. Hence it follows from Corollary \ref{scomp} that no power of $I$ can have linear resolution.
\end{proof}
\begin{rem}
Corollary \ref{graph} is due to Francisco-H${\rm \grave{a}}$- Van Tuyl (non-published work). A short proof is presented by
Nevo and Peeva in \cite[Proposition 1.8]{np}; note that there is a repeated typo in their proof and $(x_px_q)^s$ has to be
replaced by $x_px_q$ throughout that proof.
\end{rem}

Ideals with linear quotients were first considered in \cite{ht} and they are a large subclass of componentwise linear ideals.

\begin{dfn} \label{lq}
Let $I$ be a monomial ideal and let $G(I)$ be the set of minimal monomial generators of $I$. Assume that $u_1\prec u_2 \prec \ldots \prec u_t$ is a linear order on $G(I)$. We say that $I$ has {\it linear quotient with respect to $\prec$}, if for every $2\leq i\leq t$, the ideal $(u_1, \ldots, u_{i-1}):u_i$ is generated by a subset of variables. We say that $I$ has {\it linear quotient}, if it has linear quotient with respect to a linear order on $G(I)$.
\end{dfn}

In the following proposition we examine linear quotient for powers of monomial ideals.

\begin{prop} \label{strong}
Let $I$ be a monomial ideal which contains no variable and let $s\geq 1$ be an integer. Assume that $I^s$ has linear quotient with respect to a monomial order $<$ on $G(I^s)$. Then there exists a linear order $\prec$ on
$G(I)$ such that for any two monomials $u\prec v\in G(I)$
with ${\rm gcd}(u, v) = 1$ there exists a monomial $w\neq u,v$ in $G(I)$ with $u\prec w$ and ${\rm supp}(w)\subseteq {\rm supp}(u)\cup {\rm supp}(v)$.
\end{prop}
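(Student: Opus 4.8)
The plan is to use the monomial order $<$ on $G(I^s)$ to manufacture the desired linear order $\prec$ on $G(I)$, and then to read off the strong gcd-type condition from the linear-quotient property of $I^s$. First I would define $\prec$ on $G(I)$ by choosing, for each generator $u\in G(I)$, a distinguished power of it sitting in $G(I^s)$: note that if $u\in G(I)$ then $u^s\in I^s$, and in fact $u^s\in G(I^s)$ because $u$ is a minimal generator of $I$ and $I$ contains no variable (so no product of $s$ generators other than $u,\dots,u$ can divide $u^s$ — this needs a short verification). Then set $u\prec v$ if and only if $u^s < v^s$ in the monomial order on $G(I^s)$. Since $<$ is a total order and $u\mapsto u^s$ is injective, $\prec$ is a well-defined linear order on $G(I)$.

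Next, fix $u\prec v$ in $G(I)$ with $\gcd(u,v)=1$. Consider the monomial $u^{s-1}v\in I^s$. It need not lie in $G(I^s)$, but some minimal generator $f\in G(I^s)$ divides it; since $uv^{s-1}$ divides $u^s$... more carefully, I would instead look at the colon ideal. The key computation is this: $v^s\in G(I^s)$, and since $\gcd(u,v)=1$ while $u^{s}$ does not divide $v^{s}$ (they're coprime), one checks $v^s < v^{s-1}u$ cannot be directly compared, so the right object is $v^{s-1}u\in I^s$. Write $g\in G(I^s)$ for a minimal generator dividing $v^{s-1}u$; because $\gcd(u,v)=1$ and $u$ appears to the first power in $v^{s-1}u$, the support of $g$ is contained in $\mathrm{supp}(u)\cup\mathrm{supp}(v)$, and moreover $g\ne v^s$ (since $u\mid v^{s-1}u$ forces $x_i\mid g$ for some $x_i\mid u$, $x_i\nmid v$, provided we pick $g$ so that $u$-part survives — again a short support argument). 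Now apply linear quotients: in the order $<$, the generator $g$ lies somewhere, and I want to compare it with $v^s$. If $g < v^s$, then $g$ divides $\mathrm{lcm}(g,v^s)$; the linear-quotient hypothesis applied at step $v^s$ (if $g$ precedes $v^s$) gives that $(\text{earlier gens}):v^s$ is generated by variables, and $\mathrm{lcm}(g,v^s)/v^s$ divides a variable $x_j$ with $x_j\in\mathrm{supp}(g)\subseteq\mathrm{supp}(u)\cup\mathrm{supp}(v)$; this produces a new generator $h\in G(I^s)$ with $v^s\mid h$ and $\deg h=\deg(v^s)+1$, whence $h/v^s=x_j$ and $h=x_jv^s$. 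The $s$-th root story then yields a $w\in G(I)$ with $\mathrm{supp}(w)\subseteq\mathrm{supp}(u)\cup\mathrm{supp}(v)$ and $w\ne u,v$; if instead $v^s < g$, I would symmetrically run the linear-quotient step at the position of $g$ to extract a variable in $\mathrm{supp}(v)$, but since I also need $u\prec w$, the bookkeeping of which of $u^s,v^s$ is smaller in $<$ must be threaded through. The upshot is a monomial of the form $x_\ell v^s$ or similar lying in $G(I^s)$ whose $s$-th power structure delivers $w\in G(I)$ with the support containment, and with $w^s$ comparable to $u^s$ in the right direction.

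Concretely, the cleanest route is: from $g\in G(I^s)$ with $g\mid v^{s-1}u$, $g\ne v^s$, and $\mathrm{supp}(g)\subseteq\mathrm{supp}(u)\cup\mathrm{supp}(v)$, I would track a chain of linear-quotient steps interpolating between whichever of $v^s,g$ is $<$-smaller and the other, each step peeling off one variable; the resulting intermediate generators all have support inside $\mathrm{supp}(u)\cup\mathrm{supp}(v)$, and at least one of them, after taking the (unique) $s$-th root of its "square-free-per-variable reduction", gives a generator $w\in G(I)$ distinct from $u$ and $v$ with $\mathrm{supp}(w)\subseteq\mathrm{supp}(u)\cup\mathrm{supp}(v)$. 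To arrange $u\prec w$, I would, if necessary, replace $v^{s-1}u$ by $u^{s-1}v$ and run the whole argument on the $<$-larger of $u^s, v^s$ so that the generator $w$ we extract satisfies $w^s > u^s$, i.e. $u\prec w$; the case analysis here (comparing $u^s$ and $v^s$ in $<$) is where I expect to spend most of the care, because $\prec$ was defined from $<$ precisely so that "$u\prec v$" translates to "$u^s < v^s$," and I must make sure the peeled-off generator lands on the correct side.

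The main obstacle I anticipate is not any single homological fact but the passage between $G(I^s)$ and $G(I)$: one must verify that $u^s\in G(I^s)$ for $u\in G(I)$ (using that $I$ has no variable), that the map $u\mapsto u^s$ is order-embedding-friendly so $\prec$ is genuinely a linear order, and — most delicately — that a minimal generator of $I^s$ with support inside $\mathrm{supp}(u)\cup\mathrm{supp}(v)$ actually "comes from" a generator of $I$ with the same support property (this uses that $g$ divides a monomial of the form $u^{a}v^{b}$ with $a+b=s$ and $\gcd(u,v)=1$, so $g=u'v'$ with $u'\mid u^a$, $v'\mid v^b$, and minimality forces the relevant factor to be a power of a single generator). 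Once these translation lemmas are in place, the linear-quotient hypothesis is applied just as in the proof of Theorem \ref{main}'s philosophy — extracting one variable at a time — and the strong gcd conclusion drops out.
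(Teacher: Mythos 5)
There is a genuine gap, and it is precisely at the point you yourself flag as "where I expect to spend most of the care": the direction of the order. You define $\prec$ by $u\prec v$ iff $u^s<v^s$, which (since $<$ is a monomial order) is just the restriction of $<$ to $G(I)$. With that choice, for a coprime pair $u\prec v$ the linear-quotient step must be run at the $<$-larger power $v^s$, and what it hands you is a generator $g\in G(I^s)$ with $g<v^s$ and $g:v^s=(x_\ell)$ for some $x_\ell\in{\rm supp}(u)$. Writing $g$ as a product of $s$ elements of $G(I)$, the only order information you can extract is that some factor is $<v$; nothing forces any factor to be $>u$, so the required conclusion $u\prec w$ is simply not certified by your construction. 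The paper's fix is to take $\prec$ to be the \emph{reverse} of $<$ on $G(I)$: then $u\prec v$ means $v<u$, the linear-quotient step is applied at $u^s$ (the $<$-larger power, colonning against the earlier generator $v^s$), and the resulting $g<u^s$ automatically has a factor $w<u$, i.e.\ $u\prec w$; the support bookkeeping and the verification $w\neq u,v$ then go through exactly as in your Cases. Without the reversal the inequality you need points the wrong way, and no amount of swapping $u^{s-1}v$ for $uv^{s-1}$ repairs it, because under your definition $v^s$ is always the $<$-larger of the two powers.

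Two secondary points. First, your intermediate mechanism is garbled in places: the linear-quotient property at $v^s$ does not produce a generator $h$ with $v^s\mid h$ and $h=x_jv^s$ (such an $h$ could not be a minimal generator of $I^s$); it produces an \emph{earlier} generator $g$ with $g\mid x_jv^s$, i.e.\ $g:v^s=(x_j)$. Likewise there is no "chain of linear-quotient steps interpolating" between two generators, and no "$s$-th root of a square-free-per-variable reduction" is needed: one simply factors $g\in G(I^s)$ as a product $u_{k_1}\cdots u_{k_s}$ of elements of $G(I)$ and takes the $<$-smallest factor, using that exactly one factor can contain the extra variable $x_\ell$ (which appears in $g$ to the first power) to rule out $w=u$ and $w=v$. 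Second, your worry about whether $u^s\in G(I^s)$ is legitimate and is in fact glossed over by the paper as well; but it is a side issue compared with the order-direction gap above.
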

\begin{proof}
Let $G(I)=\{u_1, \ldots, u_t\}$ and assume that $u_1< u_2 <\ldots < u_t$. We consider the linear order $u_1 \succ u_2\succ \ldots \succ u_t$ on $G(I)$. We prove that using this order, $I$ satisfies the desired property. So suppose that there exist $1\leq i< j \leq t$ such that ${\rm gcd}(u_i,u_j)=1$. We have to show that there exists $k\neq i, j$ and $k<j$ such that ${\rm supp}(u_k)\subseteq {\rm supp}(u_i)\cup {\rm supp}(u_j)$. Since $<$ is a monomial order, it follows that $u_i^s< u_j^s$. Since $I^s$ has linear quotient with respect to $<$, we conclude that there exists a monomial $w\in G(I^s)$ such that $w< u_j^s$ and $$\frac{w}{{\rm gcd}(u_j^s, w)}=x_{\ell}, \ \ \ \ \ \ \ \ \ \ \ \ \ \ (\dag)$$ for some $1\leq \ell\leq n$, and moreover $$x_{\ell} \mid \frac{u_i^s}{{\rm gcd}(u_j^s, u_i^s)}. \ \ \ \ \ \ \ \ \ \ \ \ \ \ (\ddag)$$Since $w\in I^s$, we can write $w=u_{k_1}\ldots u_{k_s}$ for some integers $1\leq k_1\leq k_2 \leq \ldots \leq k_s\leq t$. Since $w< u_j^s$, it follows that $u_{k_1}<u_j$ and thus $k_1<j$. To simplify the notation, we denote $k_1$ by $k$. It follows from $(\dag)$ that $${\rm supp}(u_k)\subseteq {\rm supp}(u_j)\cup \{x_{\ell}\}. \ \ \ \ \ \ \ \ \ \ \ \ \ \ (\ast)$$We consider two cases.

{\bf Case 1.}
$x_{\ell}\nmid u_k$ or $x_{\ell} \mid u_j$. In this case $${\rm supp}(u_k)\subseteq {\rm supp}(u_j)\subseteq {\rm supp}(u_j)\cup {\rm supp}(u_i),$$and it follows from the first inclusion that ${\rm gcd}(u_k,u_j)\neq 1$ and therefore $u_k\neq u_i$. Hence the assertion is true in this case.

{\bf case 2.} $x_{\ell}\mid u_k$ and $x_{\ell} \nmid u_j$. It follows from $\ddag$ that $x_{\ell}\in {\rm supp}(u_i)$. Therefore $${\rm supp}(u_k)\subseteq {\rm supp}(u_j)\cup {\rm supp}(u_i).$$Since $I$ contains no variable, $u_k\neq x_{\ell}$. On the other hand since $x_{\ell} \nmid u_j$, using $\dag$, we conclude that $x_{\ell}^2\nmid w$ and therefore $x_{\ell}^2\nmid u_k$. This shows that ${\rm supp}(u_k)\neq \{x_{\ell}\}$. This shows that $${\rm supp}(u_k)\cap {\rm supp}(u_j)\neq \emptyset.$$ Thus ${\rm gcd}(u_j, u_k)\neq 1$. Therefore $u_k\neq u_i$ and this completes the proof.
\end{proof}

Considering the class of square-free monomial ideals, we obtain the following result.

\begin{cor} \label{sstrong}
Let $I$ be a square-free monomial ideal which contains no variable. Assume that for some integer $s\geq 1$, the ideal $I^s$ has linear quotient with respect to a monomial order. Then $I$ satisfies strong gcd condition.
\end{cor}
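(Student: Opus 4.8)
The plan is to deduce this corollary directly from Proposition \ref{strong} together with the definition of the strong gcd condition, since the hypotheses are identical and the square-free case only requires translating ``support containment'' into ``divisibility of the lcm.'' First I would invoke Proposition \ref{strong}: because $I$ is in particular a monomial ideal containing no variable and $I^s$ has linear quotient with respect to a monomial order $<$ on $G(I^s)$, there is a linear order $\prec$ on $G(I)$ such that for any $u\prec v$ in $G(I)$ with $\gcd(u,v)=1$ there exists $w\in G(I)$ with $w\neq u,v$, $u\prec w$, and $\mathrm{supp}(w)\subseteq \mathrm{supp}(u)\cup\mathrm{supp}(v)$.

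Next I would record the elementary observation (already noted in the excerpt just before Corollary \ref{scomp}) that for square-free monomials $u,v$ with $\gcd(u,v)=1$ one has $\mathrm{lcm}(u,v)=uv$, and a square-free monomial $w$ satisfies $w\mid uv$ if and only if $\mathrm{supp}(w)\subseteq\mathrm{supp}(u)\cup\mathrm{supp}(v)$. Hence the monomial $w$ produced by Proposition \ref{strong} automatically satisfies $w\mid \mathrm{lcm}(u,v)=uv$, which is exactly the divisibility requirement in part (ii) of the definition of the gcd conditions. Combining this with the conditions $w\neq u,v$ and $u\prec w$ already supplied, the order $\prec$ witnesses that $I$ satisfies the strong gcd condition, completing the proof.

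There is essentially no obstacle here: the real work was done in Proposition \ref{strong}, and this corollary is a one-line specialization to the square-free setting, whose only content is the equivalence between the combinatorial (support) formulation and the algebraic (lcm-divisibility) formulation of the strong gcd condition. The only point requiring a moment's care is that $w$ is itself square-free (so that the support/divisibility translation applies to $w$), which is immediate since $w\in G(I)$ and $I$ is square-free.

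\begin{proof}
By Proposition \ref{strong}, there exists a linear order $\prec$ on $G(I)$ such that for any two monomials $u\prec v$ in $G(I)$ with $\gcd(u,v)=1$ there exists a monomial $w\neq u,v$ in $G(I)$ with $u\prec w$ and $\mathrm{supp}(w)\subseteq \mathrm{supp}(u)\cup \mathrm{supp}(v)$. Since $I$ is square-free, every element of $G(I)$ is a square-free monomial; in particular $w$ is square-free. Moreover, as $\gcd(u,v)=1$ we have $\mathrm{lcm}(u,v)=uv$. Now $\mathrm{supp}(w)\subseteq \mathrm{supp}(u)\cup\mathrm{supp}(v)=\mathrm{supp}(uv)$ together with the fact that $w$ is square-free implies $w\mid uv=\mathrm{lcm}(u,v)$. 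Therefore the order $\prec$ satisfies the requirements of part (ii) of the definition of the gcd conditions, and hence $I$ satisfies the strong gcd condition.
\end{proof}
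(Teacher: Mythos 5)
Your proof is correct and follows exactly the paper's own argument: invoke Proposition \ref{strong} and then translate the support-containment condition into divisibility of $uv=\mathrm{lcm}(u,v)$ using square-freeness. Nothing further is needed.
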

\begin{proof}
One should only note that for every couple of square-free monomials $u, v$ with ${\rm gcd}(u, v)=1$, a square-free monomial $w$ divides $uv$ if and only if ${\rm supp}(w)\subseteq {\rm supp}(u)\cup {\rm supp}(v)$. Now the assertion follows from Proposition \ref{strong}.
\end{proof}

Let $I$ be a square-free monomial ideal which contains no variable. In Corollary \ref{sstrong} we proved that if for some integer $s\geq 1$, the ideal $I^s$ has linear quotient with respect to a suitable order, then $I$ itself satisfies strong gcd condition. In fact we have no example to show that the same assertion does not hold if $I^s$ has linear quotient with respect to an "arbitrary" order on $G(I^s)$. So we present the following conjecture.

\begin{conj} \label{conj1}
Let $I$ be a square-free monomial ideal which contains no variable. Assume that for some integer $s\geq 1$, the ideal $I^s$ has linear quotient. Then $I$ satisfies strong gcd condition.
\end{conj}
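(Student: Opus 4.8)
The statement to address is Conjecture \ref{conj1}: if $I$ is a square-free monomial ideal containing no variable and $I^s$ has linear quotient (with respect to \emph{some} order, not necessarily a monomial order) for some $s\geq 1$, then $I$ satisfies the strong gcd condition. The plan is to mimic the argument of Proposition \ref{strong}, but to extract a usable linear order on $G(I)$ from the linear-quotient order on $G(I^s)$ even when the latter is not compatible with multiplication. The idea is as follows. Fix the linear-quotient order $u_1 < u_2 < \cdots < u_N$ on $G(I^s)$. For each generator $g \in G(I)$, the power $g^s$ lies in $I^s$, hence is divisible by some element of $G(I^s)$; record the position in the order of the smallest such divisor (equivalently, look at where $g^s$ itself sits if $g^s \in G(I^s)$, which holds when $I$ is generated in a single degree but need not hold in general). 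Define $\prec$ on $G(I)$ by ordering the generators $g$ according to this recorded position, breaking ties arbitrarily, and then \emph{reversing}, exactly as in Proposition \ref{strong}. The task is then to show that for $u \prec v$ in $G(I)$ with $\gcd(u,v) = 1$, there is a witness $w \in G(I)$ with $u \prec w$ and $\mathrm{supp}(w) \subseteq \mathrm{supp}(u)\cup\mathrm{supp}(v)$.

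The key step is to run the linear-quotient property of $I^s$ on the pair $v^s$ and an auxiliary monomial built from $u$ and $v$. First I would set up, as in the proof of Theorem \ref{main}, a monomial $u' \in I^s$ with $\mathrm{lcm}(u', v^s) = uv^s$ and $\deg u' = \deg v^s$ — concretely $u' = u v^{s-1}\cdot(\text{a divisor of }v\text{ of the appropriate degree})$ in the single-degree case, and more care in general. Passing to the divisors in $G(I^s)$ of $u'$ and $v^s$, and using that linear quotients means $(\{z \in G(I^s) : z < v^s\}) : v^s$ is generated by variables, I would locate a generator $w_0 \in G(I^s)$ with $w_0$ below $v^s$'s representative in the order, $w_0/\gcd(w_0, v^s) = x_\ell$, and $x_\ell \mid u^s/\gcd(u^s, v^s)$, i.e. $x_\ell \in \mathrm{supp}(u)\setminus\mathrm{supp}(v)$. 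Factor $w_0 = u_{k_1}\cdots u_{k_s}$ with $u_{k_i} \in G(I)$; the support condition forces $\mathrm{supp}(u_{k_i}) \subseteq \mathrm{supp}(v) \cup \{x_\ell\} \subseteq \mathrm{supp}(u) \cup \mathrm{supp}(v)$ for every $i$. As in the two-case analysis of Proposition \ref{strong}, at least one factor $u_k$ among these satisfies $\gcd(u_k, v) \neq 1$ (here square-freeness is essential: if every $u_{k_i}$ equalled $x_\ell$ then $w_0$ would have $x_\ell^s$ as a factor, contradicting $x_\ell \nmid v$ via $w_0/\gcd(w_0,v^s) = x_\ell$), so $u_k \neq u$. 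This $u_k$ is the desired witness $w$.

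The main obstacle — and the reason this is a conjecture rather than a proposition — is the order-comparison step: I must guarantee $u \prec w$ in the \emph{reversed} order on $G(I)$, i.e. that the recorded position of $w^s = u_k^s$ precedes that of $u^s$. When $<$ is a monomial order this is automatic because $w_0 < v^s$ and one can compare $u_k$ with $v$ directly (that is precisely where Proposition \ref{strong} uses monotonicity of $<$ under products); but for an arbitrary linear-quotient order there is no reason the factor $u_k$ of the small generator $w_0$ should have $u_k^s$ ranked before $u^s$, nor even before $v^s$. Overcoming this would require either (a) showing that any linear-quotient order on $G(I^s)$ can be \emph{refined or replaced} by a monomial order also exhibiting linear quotients — which is itself open and would settle the conjecture immediately — or (b) a cleverer choice of the auxiliary monomial $u'$ and of which divisor of $w_0$ to promote to the witness, exploiting that $u' $ sits strictly between the relevant elements. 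A secondary difficulty, present already in defining $\prec$, is that when $I$ is not generated in a single degree the map $g \mapsto g^s$ need not land in $G(I^s)$, so the correspondence between generators of $I$ and generators of $I^s$ is many-to-one and not canonical; one would need to check the argument is insensitive to the choice of divisor of $g^s$, or to restrict attention first to the single-degree case and then reduce the general case to it. I would expect a complete proof to hinge on resolving (a).
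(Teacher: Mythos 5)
The statement you were asked about is Conjecture \ref{conj1}, and the paper offers \emph{no proof of it}: it is posed as an open problem, with the authors noting only that the case $s=1$ (i.e.\ when $I$ itself has linear quotient) is known, by a result of Berglund, and that they know of no counterexample. So there is no argument in the paper for you to match, and your proposal --- which is explicit that it does not close the argument --- is correctly not a proof. Your diagnosis of the obstruction is accurate and is exactly the point at which the paper's proof of Proposition \ref{strong} uses the monomial-order hypothesis: from $w<u_j^s$ and the factorization $w=u_{k_1}\cdots u_{k_s}$ one deduces $u_{k_1}<u_j$ only because a monomial order is compatible with multiplication. For an arbitrary linear order exhibiting linear quotients on $G(I^s)$ there is no control over where the factors of the ``small'' generator $w$ sit relative to $u_j$, so the witness produced by the gcd/support analysis cannot be placed correctly in any order on $G(I)$ induced from the order on $G(I^s)$. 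Your two-case support argument (locating a factor $u_k$ of $w$ with $\mathrm{supp}(u_k)\subseteq\mathrm{supp}(u)\cup\mathrm{supp}(v)$ and $\gcd(u_k,v)\neq 1$) does go through without the monomial-order assumption and would prove the plain gcd condition for $I$; it is only the \emph{strong} gcd condition, which requires the compatible linear order, that remains out of reach.

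One secondary caution: your proposed construction of $\prec$ via ``the smallest divisor of $g^s$ in $G(I^s)$'' is not obviously well behaved even in the monomial-order case the paper does handle --- there the order taken on $G(I)$ is simply the reverse of the restriction of the monomial order itself, and the needed comparison is between $u_{k_1}$ and $u_j$ directly, not between their $s$-th powers. If you pursue route (a) of your closing paragraph (replacing an arbitrary linear-quotient order by a monomial one), be aware that this is at least as hard as the conjecture itself and is not known even for ideals generated in a single degree; route (b) would require an idea genuinely absent from the paper.
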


We note that by \cite[Proposition 6]{b}, Conjecture \ref{conj1} is true when $I$ itself has linear quotient.

For a monomial ideal $I$ in $S$ the ring $S/I$, is called {\it Golod}
if all Massey operations on the Koszul complex of $S/I$ with respect of $\mathrm{x}= x_1,\ldots, x_n$ vanish.
Golod \cite{g} showed that the vanishing of the Massey operations is equivalent to the equality case in the following coefficientwise inequality of power-series which was
first derived by Serre:
$$\sum_{i \geq 0} \dim_{\mathbb{K}} {\rm Tor}_i^{S/I} (\mathbb{K},\mathbb{K}) t^i  \leq \frac{(1+t)^n}{1-t \sum_{i\geq 1} \dim_{\mathbb{K}} {\rm Tor}_i^S(S/I,\mathbb{K}) t^i}$$

We refer the reader to \cite{a} and \cite{gl} for further information on the Golod property.

By \cite[Theorem 5.5]{bj}, we know that if a monomial ideal $I$ satisfies strong gcd condition, then $S/I$ is a Golod ring. Thus as a consequence of Corollary \ref{sstrong}, we obtain the following result.

\begin{thm} \label{golod}
Let $I$ be a square-free monomial ideal which contains no variable. Assume that for some integer $s\geq 1$, the ideal $I^s$ has linear quotient with respect to a monomial order. Then $S/I$ is a Golod ring.
\end{thm}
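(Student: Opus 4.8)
The plan is to chain together the results already established in this section, so the proof is essentially a two-step reduction. First I would invoke Corollary \ref{sstrong}: since $I$ is a square-free monomial ideal containing no variable and $I^s$ has linear quotient with respect to a monomial order for some $s\geq 1$, that corollary gives immediately that $I$ satisfies the strong gcd condition. Here I would note that Corollary \ref{sstrong} rests on Proposition \ref{strong} (the monomial-order hypothesis is used there to compare $u_i^s$ and $u_j^s$) together with the observation that for square-free monomials $w\mid uv$ is equivalent to $\mathrm{supp}(w)\subseteq\mathrm{supp}(u)\cup\mathrm{supp}(v)$.

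Second, I would appeal to the theorem of Berglund and J\"ollenbeck \cite[Theorem 5.5]{bj}, which states that if a monomial ideal satisfies the strong gcd condition then $S/I$ is Golod. Combining this with the first step yields that $S/I$ is a Golod ring, which is exactly the assertion. Concretely the proof is just: ``By Corollary \ref{sstrong}, $I$ satisfies the strong gcd condition. Hence by \cite[Theorem 5.5]{bj}, $S/I$ is a Golod ring.''

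Since every ingredient is already in place, there is no real obstacle in the argument itself; the only thing to be careful about is making sure the hypotheses line up — in particular that ``linear quotient with respect to a monomial order'' is precisely what Proposition \ref{strong} and hence Corollary \ref{sstrong} require, and that ``$I$ contains no variable'' is needed (it is used in Proposition \ref{strong} to rule out $u_k=x_\ell$ and to force $d'\geq 2$ in the underlying arguments). If one wanted a self-contained presentation instead of citing \cite{bj}, the genuinely hard part would be reproving that strong gcd implies Golod, which requires the Morse-theoretic analysis of the Taylor complex; but given that \cite[Theorem 5.5]{bj} is available as a black box, the proof reduces to the one-line deduction above.
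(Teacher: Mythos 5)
Your proposal is correct and is exactly the paper's argument: the theorem is stated there as an immediate consequence of Corollary \ref{sstrong} combined with \cite[Theorem 5.5]{bj}. Nothing further is needed.
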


Let $I$ be a monomial ideal. J{\"o}llenbeck \cite[Lemma 7.4]{j} proves that if $S/I$ is a Golod ring, then $I$ satisfies gcd condition. In Corollary \ref{scomp} we proved that if $I$ is a square-free monomial ideal which contains no variable, such that $I^s$ is componentwise linear for some $s\geq 1$, then $I$ satisfies gcd condition. So it is natural to ask whether an stronger result is true (see Questin \ref{quest}).

\begin{ques} \label{quest}
Let $I$ be a square-free monomial ideal which contains no variable. Assume that $I^s$ is componentwise linear for some $s\geq 1$. Is $S/I$ a Golod ring?
\end{ques}

We note that by \cite[Theorem 4]{hrw} the answer of Question \ref{quest} is positive in the case of $s=1$. We also note that by \cite[Theorem 1.1]{sw}, for every integer $s\geq 2$ and every monomial ideal $I$, the ring $S/I^s$ is Golod (see also \cite{hh1}).

%%%%%%%%%%%%%%%%%%%%%%%%%%%%%%%%%%%%%%%%%%%%%%%%%%%%%%%%%%%%%%%%%%%%%%%%%%

\section{Lower bounds for Betti numbers} \label{sec3}

If $I\subseteq S$ is a square-free monomial ideal, we
can identify the set of minimal monomial generators of $I$ with the edge set of a clutter, defined as follows.

\begin{dfn}
Let $V$ be a finite set. A {\it clutter} $\mathcal{C}$ with vertex set $V$ consists of a set of
subsets of $V$, called the {\it edges} of $\mathcal{C}$, with the property that no edge contains another. A clutter $\mathcal{C}$ is called $k$-uniform if the cardinality of every edge of $\mathcal{C}$ is equal to $k$. Thus a clutter $\mathcal{C}$ is $2$-uniform if and only if it is a graph.
\end{dfn}

We write $V(\mathcal{C})$ to denote the vertices of $\mathcal{C}$, and $E(\mathcal{C})$ to denote its edges. Let $\mathcal{C}$ be a clutter and assume that $\mid V(\mathcal{C})\mid=n$. For every subset $e\subseteq \{1, 2, \ldots, n\}$, we write $x_e$ to denote the square-free monomial $\prod_{i \in e} x_i$. Then the {\it edge ideal} of $\mathcal{C}$ is defined to be
$$I(\mathcal{C}) = (x_e : e\in E(\mathcal{C})),$$as an ideal in the polynomial ring $S = \mathbb{K}[x_1, \dots, x_n]$.

\begin{dfn}
Let $\mathcal{C}$ be a clutter. A subset $\{e_1,\ldots, e_t\} \subseteq E(\mathcal{C})$ is called an {\it induced matching} of $\mathcal{C}$, if for every $i\neq j$, $e_i\cap e_j = \emptyset$ and moreover $e_1, \ldots, e_t$ are the only edges of $\mathcal{C}$ which are contained in $\bigcup_{i=1}^t e_i$. The maximum cardinality of an induced matching of $\mathcal{C}$ is called the {\it induced matching number} of $\mathcal{C}$ and is denoted by ${\rm Indmath}(\mathcal{C})$.
\end{dfn}

Let $\mathcal{C}$ be a $k$-uniform clutter and $s\geq 2$ be an integer. In the following theorem we provide a lower bound for $\beta_{1,ks+k}(I(\mathcal{C})^s)$ and $\beta_{2,ks+2k}(I(\mathcal{C})^s)$ in terms of ${\rm Indmath}(\mathcal{C})$.

\begin{thm} \label{beti}
Let $\mathcal{C}$ be a $k$-uniform clutter and assume that ${\rm Indmath}(\mathcal{C})=t$. Then for every integer $s\geq 2$ the following inequalities hold. $$\beta_{1,ks+k}(I(\mathcal{C})^s)\geq 2\binom{t}{2} \ \ \ \ \ \ \ \ \ \beta_{2,ks+2k}(I(\mathcal{C})^s)\geq 3\binom{t}{3}$$
\end{thm}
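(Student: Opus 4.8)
The plan is to exhibit explicit elements of $L_{I(\mathcal{C})^s}$ whose open intervals have disconnected order complexes (for the $\beta_1$ bound) or order complexes with nonvanishing $\widetilde{H}_1$ (for the $\beta_2$ bound), and then invoke the Gasharov--Peeva--Welker formula $\beta_{i,m}(I)=\dim_{\mathbb{K}}\widetilde{H}_{i-1}(\Delta(1,m);\mathbb{K})$. Fix an induced matching $\{e_1,\dots,e_t\}$ of $\mathcal{C}$ and write $g_i=x_{e_i}\in G(I(\mathcal{C}))$, so the $g_i$ are squarefree monomials of degree $k$ with pairwise disjoint supports, and by the induced-matching condition no other generator of $I(\mathcal{C})$ is supported inside $\bigcup_i e_i$.

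For the first inequality, fix $i\neq j$ and consider the monomial $m=g_i^s g_j$ (degree $ks+k$). This lies in $L_{I(\mathcal{C})^s}$ since $g_i^s$ and $g_i^{s-1}g_j$ are both generators of $I(\mathcal{C})^s$ with lcm equal to $m$. I claim $g_i^s$ is an isolated vertex of $\Delta(1,m)$: any generator $w\in G(I(\mathcal{C})^s)$ dividing $m$ with $w\neq g_i^s$ must involve a factor from $G(I(\mathcal{C}))$ supported in $e_j$, but by the induced-matching property the only such factor available inside $\mathrm{supp}(m)=e_i\cup e_j$ is $g_j$ itself, forcing $g_j\mid w$, hence $\mathrm{lcm}(g_i^s,w)=m$; so $g_i^s$ is connected to nothing in $(1,m)$. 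Since $(1,m)$ also contains $g_i^{s-1}g_j\neq g_i^s$, the complex $\Delta(1,m)$ is disconnected, so $\beta_{1,m}(I(\mathcal{C})^s)\geq 1$. The point is that the $\mathbb{Z}^n$-graded Betti numbers in distinct multidegrees add up in the $\mathbb{Z}$-graded Betti number $\beta_{1,ks+k}$; the multidegrees $g_i^s g_j$ for ordered pairs $(i,j)$, $i\neq j$, are pairwise distinct, and there are $2\binom{t}{2}$ of them, giving $\beta_{1,ks+k}(I(\mathcal{C})^s)\geq 2\binom{t}{2}$.

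For the second inequality, fix a $3$-subset $\{i,j,\ell\}$ and consider $m=g_i^s g_j g_\ell$ (degree $ks+2k$), again in $L_{I(\mathcal{C})^s}$. I want $\dim_{\mathbb{K}}\widetilde{H}_1(\Delta(1,m);\mathbb{K})\geq 1$. The atoms of $(1,m)$ that matter are $a=g_i^s$, $b=g_i^{s-1}g_j$, $c=g_i^{s-1}g_\ell$ together with $g_i^{s-2}g_jg_\ell$ (when $s\geq 2$), and the key structural fact — proved exactly as above using the induced-matching property — is that $g_i^s$ can only pair (via a strictly smaller lcm) with generators divisible by $g_j$ or by $g_\ell$, and one checks the subposet they generate has order complex a hollow triangle (or a more refined configuration whose first homology is still nonzero). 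The cleanest way is to identify, inside $(1,m)$, an induced subcomplex that is a circle, using the three "coordinate" elements $g_i^sg_j$, $g_i^sg_\ell$, $g_jg_\ell g_i^{s-1}$ or similar, and to argue the larger complex deformation retracts onto it; Mayer--Vietoris or a direct chain-level cycle exhibiting a nonbounding $1$-cycle also works. This yields $\beta_{2,m}(I(\mathcal{C})^s)\geq 1$, and since the multidegrees $g_i^sg_jg_\ell$ over the $3\binom{t}{3}$ ordered triples with distinguished first coordinate $i$ are distinct, we get $\beta_{2,ks+2k}(I(\mathcal{C})^s)\geq 3\binom{t}{3}$.

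The main obstacle is the homological computation behind the second inequality: unlike the $\beta_1$ case, where "disconnected" is detected by counting components, here I must actually pin down the homotopy type of $\Delta(1,g_i^sg_jg_\ell)$ (or at least a lower bound on its first Betti number) and verify that the element $g_i^s$ behaving as a "near-isolated" vertex, together with the combinatorics of which products $g_i^ag_j^bg_\ell^c$ lie below $m$, produces a genuine $1$-cycle. This requires carefully listing the elements of the open interval and their covering relations for $s\geq 2$, and checking the induced-matching hypothesis really does exclude all "unwanted" generators of $I(\mathcal{C})^s$ from sitting inside the relevant multidegrees — which is where the uniformity and the definition of induced matching are used in full.
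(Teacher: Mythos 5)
Your strategy is exactly the paper's: take an induced matching $u_1,\dots,u_t$, look at the multidegrees $u_i^su_j$ (for ordered pairs) and $u_i^su_ju_\ell$ (three per $3$-subset), and apply the Gasharov--Peeva--Welker formula to the corresponding open intervals in the lcm lattice of $I(\mathcal{C})^s$. Your treatment of the first inequality is complete and correct: by the induced-matching property the only minimal generators of $I^s$ dividing $u_i^su_j$ are $u_i^s$ and $u_i^{s-1}u_j$, their join is the top element, so $\Delta(1,u_i^su_j)$ is two isolated points and contributes $1$ to $\beta_{1,ks+k}$ in each of the $2\binom{t}{2}$ distinct multidegrees.

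For the second inequality, however, you have correctly set up the intervals but left the essential step --- showing $\widetilde{H}_1(\Delta(1,u_i^su_ju_\ell);\mathbb{K})\neq 0$ --- as an acknowledged ``obstacle,'' offering only a guess (``hollow triangle or a more refined configuration'') and a menu of possible techniques. As written this is a gap, not a proof; and the guess is not literally right, though it is close. The computation you deferred is short and you should just do it: by the induced-matching property the atoms of $(1,u_i^su_ju_\ell)$ are exactly $u_i^s$, $u_i^{s-1}u_j$, $u_i^{s-1}u_\ell$, $u_i^{s-2}u_ju_\ell$, and the only non-atom elements of the open interval are the pairwise joins $u_i^su_j$, $u_i^su_\ell$, $u_i^{s-1}u_ju_\ell$ (every other join, e.g. $u_i^s\vee u_i^{s-2}u_ju_\ell$, equals the top and every join of three or more atoms is the top or equals $u_i^{s-1}u_ju_\ell$). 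Hence the order complex is one-dimensional: a hexagon
$u_i^s - u_i^su_j - u_i^{s-1}u_j - u_i^{s-1}u_ju_\ell - u_i^{s-1}u_\ell - u_i^su_\ell - u_i^s$
with the single pendant vertex $u_i^{s-2}u_ju_\ell$ attached to $u_i^{s-1}u_ju_\ell$. This is homotopy equivalent to a circle, so $\widetilde{H}_1$ is one-dimensional and $\beta_{2,u_i^su_ju_\ell}(I^s)=1$; no deformation-retraction or Mayer--Vietoris machinery is needed beyond this explicit list. (The paper is equally terse here, asserting the interval is one-dimensional with one-dimensional first homology, but it does assert the exact answer; your write-up must at least exhibit a nonbounding $1$-cycle, which the hexagon provides.) With that filled in, the count of $3\binom{t}{3}$ distinct multidegrees of total degree $ks+2k$ finishes the argument as you indicate.
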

\begin{proof}
Set $I=I(\mathcal{C})$. By assumption there exist monomials $u_1, \ldots, u_t\in G(I)$ such that for every $1\leq i<j\leq t$ we have ${\rm gcd}(u_i, u_j)=1$ and $u_1, \ldots, u_t$ are the only monomials in $G(I)$ which divide $\prod_{i=1}^tu_i$. To prove the first inequality, for every $1\leq i<j\leq t$, consider the open intervals $(1,u_i^su_j)$ and $(1,u_iu_j^s)$ in the lcm lattice of $I^s$. The chain complex of the first interval consists of two isolated vertices $u_i^s$ and $u_i^{s-1}u_j$. So using by \cite[Theorem 3.3]{gpw} we conclude that $$\beta_{1,u_i^su_j}(I^s)={\rm dim}_{\mathbb{K}}\tilde{H}_0(\Delta(1,u_i^su_j);\mathbb{K})=1.$$ Similarly $\beta_{1,(u_iu_j^s)}(I^s)=1$. Since ${\rm deg}(u_i^su_j)={\rm deg}(u_iu_j^s)=ks+k$ and since we have $\binom{t}{2}$ many choices for $i$ and $j$, we conclude that $$\beta_{1,ks+k}(I^s)\geq 2\binom{t}{2}.$$

We prove the second inequality using a similar argument. For every $1\leq i<j< l\leq t$, consider the open intervals $(1,u_i^su_ju_l)$ and $(1,u_iu_j^su_l)$ and $(1,u_iu_ju_l^s)$ in the lcm lattice of $I^s$. The dimension of the chain complex of each interval is equal to one and one can easily check that the first reduced homology of the chain complex of each interval is a one dimensional $\mathbb{K}$-vector space. This shows that $$\beta_{2,(u_i^su_ju_l)}(I^s)=\beta_{2,(u_iu_j^su_l)}(I^s)=\beta_{2,(u_iu_ju_l^s)}(I^s)=1.$$Since ${\rm deg}(u_i^su_ju_l)={\rm deg}(u_iu_j^su_l)={\rm deg}(u_iu_ju_l^s)=ks+2k$ and since we have $\binom{t}{3}$ many choices for $i,j$ and $l$, we conclude that $$\beta_{2,ks+2k}(I^s)\geq 3\binom{t}{3}.$$
\end{proof}

We recall that the {\it Castelnuovo-Mumford regularity} of an ideal $I$ is equal to
$${\rm reg}(I) = {\rm max}\{j: \beta_{i, i+j}(I)\neq 0 \ \ {\rm for \ \ some} \ \ i\}.$$

As an immediate consequence of Theorem \ref{beti}, we conclude the following result.

\begin{cor}
Let $\mathcal{C}$ be a $k$-uniform clutter and $s\geq 2$ be an integer.
\begin{itemize}
\item[(i)] If ${\rm Indmath}(\mathcal{C})=2$, then ${\rm reg}(I(\mathcal{C})^s)\geq ks+k-1$.
\item[(ii)] If ${\rm Indmath}(\mathcal{C})\geq 3$, then ${\rm reg}(I(\mathcal{C})^s)\geq ks+2k-2$.
\end{itemize}
\end{cor}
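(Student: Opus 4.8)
The plan is to read both regularity bounds off directly from the two nonvanishing Betti numbers furnished by Theorem \ref{beti}, combined with the definition of regularity recalled immediately above the corollary. Since ${\rm reg}(I(\mathcal{C})^s)=\max\{j:\beta_{i,i+j}(I(\mathcal{C})^s)\neq 0\text{ for some }i\}$, it suffices in each case to exhibit a single homological degree $i$ together with an internal degree for which the corresponding graded Betti number is nonzero; the value of $j$ that realizes that Betti number then gives a lower bound for the maximum defining the regularity.

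For part (i) I would set $t={\rm Indmath}(\mathcal{C})=2$ and apply the first inequality of Theorem \ref{beti}, which reads $\beta_{1,ks+k}(I(\mathcal{C})^s)\geq 2\binom{2}{2}=2>0$. Thus this Betti number is nonzero in homological degree $i=1$ with total degree $ks+k$. Matching $i+j=ks+k$ forces $j=ks+k-1$, and the nonvanishing of $\beta_{1,(ks+k-1)+1}$ shows that the maximum in the definition of regularity is at least $ks+k-1$, so ${\rm reg}(I(\mathcal{C})^s)\geq ks+k-1$.

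For part (ii) I would instead use $t={\rm Indmath}(\mathcal{C})\geq 3$ together with the second inequality of Theorem \ref{beti}. Here $\beta_{2,ks+2k}(I(\mathcal{C})^s)\geq 3\binom{t}{3}\geq 3\binom{3}{3}=3>0$, where the middle step uses that $\binom{t}{3}$ is nondecreasing in $t$ and is at least $1$ once $t\geq 3$. This gives a nonzero Betti number in homological degree $i=2$ and total degree $ks+2k$, so $i+j=ks+2k$ yields $j=ks+2k-2$, whence ${\rm reg}(I(\mathcal{C})^s)\geq ks+2k-2$.

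There is essentially no serious obstacle, since all of the homological content is already contained in Theorem \ref{beti}; the corollary is a pure bookkeeping consequence of the definition of regularity. The only point I would be careful to record explicitly is the difference between the two hypotheses: part (i) assumes the equality ${\rm Indmath}(\mathcal{C})=2$ so that $\binom{t}{2}=\binom{2}{2}=1$, whereas part (ii) assumes only the inequality ${\rm Indmath}(\mathcal{C})\geq 3$, which is why one should note $\binom{t}{3}\geq 1$ for every $t\geq 3$ in order to keep the relevant Betti number strictly positive and thus preserve the lower bound.
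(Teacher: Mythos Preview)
Your proposal is correct and is exactly the argument the paper has in mind: the corollary is stated without proof as ``an immediate consequence of Theorem \ref{beti},'' and your derivation simply makes explicit the passage from the nonvanishing of $\beta_{1,ks+k}$ and $\beta_{2,ks+2k}$ to the corresponding lower bounds on regularity via the definition ${\rm reg}(I)=\max\{j:\beta_{i,i+j}(I)\neq 0\}$.
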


%%%%%%%%%%%%%%%%%%%%%%%%%%%%%%%%%%%%%%%%%%%%%%%%%%%%%%%%%%%%%%%%%%%%%%%%%%

%\section*{Acknowledgments}

%%%%%%%%%%%%%%%%%%%%%%%%%%%%%%%%%%%%%%%%%%%%%%%%%%%%%%%%%%%%%%%%%%%%%%%%%%

%%%%%%%%%%%%%%%%%%%%%%%%%%%%%%%%%%%%%%%%%%%%%%%%%%%%%%%%%%%%%%%%%%%%%%%%%%

\end{document}